\newtheorem{theorem}{Theorem}[section]
\newtheorem{corollary}{Corollary}
\newtheorem{lemma}[theorem]{Lemma}
\theoremstyle{definition}
\newtheorem{definition}[theorem]{Definition}
\newtheorem{remark}{Remark}
\newcommand{\Rd}{\mathbb{R}^d}
\newcommand{\R}{\mathbb{R}}
\title{A Linear Test for Global Nonlinear Controllability}
\author{Karthik Elamvazhuthi}
\affil{University of California, Riverside}
\begin{document}
	\date{}
	\maketitle

	\begin{abstract}
		It is known that if a nonlinear control affine system without drift is bracket generating, then its associated sub-Laplacian is invertible under some conditions on the domain. In this note, we investigate the converse. We show how invertibility of the sub-Laplacian operator implies a weaker form of controllability, where the reachable sets of a neighborhood of a point have full measure. From a computational point of view, one can then use the spectral gap of the (infinite-dimensional) self-adjoint operator to define a notion of degree of controllability.
		
		An essential tool to establish the converse result is to use the relation between invertibility of the sub-Laplacian to the the controllability of the corresponding continuity equation using possibly non-smooth controls. Then using Ambrosio-Gigli-Savare's superposition principle from optimal transport theory we relate it to controllability properties of the control system. While the proof can be considered of the Perron-Frobenius type, we also provide a second dual Koopman point of view.
	\end{abstract}
	
	\section{Introduction}

	Suppose we have a $m\leq d$ smooth globally Lipschitz vector fields $g_i \in C^{\infty}(\Rd;\Rd)$, for $i =1,...,m$. A standard problem in control theory is if given $x, y \in \Rd $, does there exist a control $u(t)$ such that the system of ordinary differential equations (ODE)
	\begin{equation}
		\dot{\gamma}(t) = \sum^m_{i=1}u_i(t) g_i(\gamma(t)) 
		\label{eq:ctrsys}
	\end{equation}
	\[\gamma(0) = x,~~~\gamma(1)= y \]
	By now there is an extremely rich body of work on this topic \cite{agrachev2013control,isidori1985nonlinear}. In this brief note, we consider the relationship between the control systems of the type \eqref{eq:ctrsys} with a related class of partial differential operators. For this, we define
	associated first order partial differential operators that acts on smooth functions $h(x)$ through the operation,
	\begin{align}
		X_i h &= \sum\limits_{j=1}^d g_{i}^{j}(x) \ \frac{\partial}{\partial x_j}h ,   \nonumber 
	\end{align}
	and its formal adjoint
	\[X_i^*h = \sum_{j=1}^d -\frac{\partial}{\partial x_j} (g_i^j(x) h). ~~~\]
	
	Given these two operators, we can introduce the {\it sub-Laplacian operator},
	\begin{equation}
		\label{eq:nonlap}
		\Delta_H h = -\sum_{i=1}^{m}X_i^*X_ih,  
	\end{equation}
	The relationship between this operator and the control system \eqref{eq:ctrsys} is well understood in the partial differential equation (PDE) community. For example, if the system \eqref{eq:ctrsys} is bracket generating of order $r$\footnote{The system \eqref{eq:ctrsys} is said to be bracket generating of order $r$ if the span of repeated Lie brackets of the the vector fields $\{g_1,...,g_m\}$, of up to order $r$ has full rank at all points.} is a sufficient condition for  {\it hypoellipticity property} \cite{bramanti2022hormander,rothschild1976hypoelliptic} when the vector fields are smooth,  as shown by H{\"o}rmander \cite{hormander1967hypoelliptic}.

	However, hypoellipticity is not a computationally testable hypothesis. Therefore, we are interested in an alternative property of the sub-Laplacian that can be used to infer controllability of the system \eqref{eq:ctrsys}. For example, invertibility of the operator is one such criterion. In this regard the following is known when the system \eqref{eq:ctrsys} is considered on an open bounded set $\Omega$.
	
	\begin{theorem}
		\label{thm:ctrbinv}
		\textbf{(Bracket Generation implies invertibility of the sub-Laplacian)}
		Suppose the system \eqref{eq:ctrsys} is bracket generating of order $r$. Let $\Omega$ be a $\epsilon-\delta$ subdomain of $\Rd$. then $\Delta_H$ is invertible on $L^{\perp}_2(\Omega)$, the subset of functions in $L_2(\Omega)$ that integrate to $0$ on $\Omega$. 
	\end{theorem}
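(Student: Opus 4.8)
The plan is to realize $\Delta_H$ as the nonnegative self-adjoint operator associated with the symmetric quadratic form
\[
a(h,k) \;=\; \sum_{i=1}^{m} \int_\Omega (X_i h)(X_i k)\, dx, \qquad h,k \in W,
\]
on the horizontal Sobolev space $W = \{\, h \in L_2(\Omega) : X_i h \in L_2(\Omega),\ i = 1,\dots,m\,\}$; this is the Neumann realization of $-\sum_i X_i^* X_i$, for which the constants lie in the form domain and no boundary trace is imposed, which is why one works modulo constants, i.e. on $L^{\perp}_2(\Omega)$. The form $a$ is closed, symmetric and nonnegative, so it generates a unique nonnegative self-adjoint operator $\Delta_H$ whose kernel is exactly $\{\, h \in W : X_i h = 0 \text{ for all } i\,\}$. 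Granting a subelliptic Poincar\'e inequality $a(h,h) \ge c\,\|h\|_{L_2}^2$ for $h \in W \cap L^{\perp}_2(\Omega)$, the form $a$ is coercive there, and Lax--Milgram — equivalently the spectral theorem, which then forces $\mathrm{spec}(\Delta_H) \subseteq \{0\}\cup[c,\infty)$ with the eigenvalue $0$ carried by the constants — yields that $\Delta_H$ is a bijection of $\mathrm{dom}(\Delta_H)\cap L^{\perp}_2(\Omega)$ onto $L^{\perp}_2(\Omega)$ with bounded inverse. So the real content is the Poincar\'e inequality.

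I would obtain the Poincar\'e inequality by a compactness argument. The first ingredient is H\"ormander's hypoellipticity theorem in quantitative form: since the $g_i$ are smooth and bracket generating of order $r$, there exist $s=s(r)\in(0,1]$ and $C>0$ with the local subelliptic estimate $\|h\|_{H^s(\Omega')}^2 \le C\big(a(h,h) + \|h\|_{L_2(\Omega)}^2\big)$ for every $\Omega' \Subset \Omega$. The second ingredient is the hypothesis on $\Omega$: because $\Omega$ is an $\epsilon$-$\delta$ subdomain, $W$ admits a bounded Sobolev extension operator, and combining this with the interior subelliptic estimate gives that the embedding $W \hookrightarrow L_2(\Omega)$ is \emph{compact up to the boundary}. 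Assume now the Poincar\'e inequality fails: there is a sequence $h_n \in W \cap L^{\perp}_2(\Omega)$ with $\|h_n\|_{L_2}=1$ and $a(h_n,h_n)\to 0$. Then $(h_n)$ is bounded in $W$, so along a subsequence $h_n \to h$ in $L_2(\Omega)$ with $\|h\|_{L_2}=1$ and $\int_\Omega h = 0$; weak lower semicontinuity of $a$ gives $a(h,h)=0$, i.e. $X_i h = 0$ distributionally for every $i$. By hypoellipticity $h$ is smooth, hence constant along the flow of each $g_i$; since the $g_i$ are bracket generating, a local application of Chow--Rashevskii shows $h$ is locally constant in the Carnot--Carath\'eodory metric, hence constant on the connected open set $\Omega$. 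A constant in $L^{\perp}_2(\Omega)$ is zero, contradicting $\|h\|_{L_2}=1$. This establishes the Poincar\'e inequality and, with the first paragraph, the theorem.

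I expect the main obstacle to be exactly the compact embedding $W \hookrightarrow L_2(\Omega)$ \emph{up to the boundary}: the interior gain of regularity from the subelliptic estimate must be married with boundary regularity, and in the sub-Riemannian setting both Sobolev extension and Rellich-type compactness can fail for rough domains — the $\epsilon$-$\delta$ hypothesis is precisely what rescues them. Once that global compactness is in hand, the remaining pieces (the quadratic-form setup, the unique-continuation step ``$X_i h = 0$ for all $i$ $\Rightarrow$ $h$ constant'' via Chow--Rashevskii, and the passage from coercivity to bounded invertibility) are standard.
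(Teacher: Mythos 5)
Your proposal is essentially correct, but it is worth noting that the paper does not actually prove Theorem \ref{thm:ctrbinv}: it defers to the literature (\cite{nhieu2001neumann}, \cite{garofalo1998lipschitz}, \cite{elamvazhuthi2024denoising}) and only sketches, for the boundaryless case, the chain ``subelliptic regularity $\Rightarrow$ compactness of the domain of $\Delta_H$ $\Rightarrow$ discrete spectrum $\Rightarrow$ invertibility on $L^2_{\perp}(\Omega)$.'' Your argument is the standard equivalent reformulation of that chain: a Poincar\'e inequality on $WH^1_{\perp}(\Omega)$, obtained by contradiction from (i) compactness of the embedding $WH^1(\Omega)\hookrightarrow L^2(\Omega)$ and (ii) the identification of $\{h : X_i h = 0 \ \forall i\}$ with the constants, followed by Lax--Milgram; both routes rest on the same two pillars, and you have correctly isolated the one that carries all the weight, namely the up-to-the-boundary compact embedding. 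Two refinements: first, the $\epsilon$-$\delta$ condition in \cite{garofalo1998lipschitz} is taken with respect to the Carnot--Carath\'eodory metric of the system rather than the Euclidean metric, and the compactness is obtained there by combining the extension operator with a Rellich-type theorem on doubling metric measure spaces --- it does not follow from the interior subelliptic estimate alone, so this step should be cited rather than asserted; second, you need $\Omega$ connected (implicit in ``domain'') for the unique-continuation step. That step --- distributional $X_i h = 0$ for all $i$, hence $h$ smooth by H\"ormander, hence constant by Chow--Rashevskii --- is sound, and is the same mechanism the paper deploys in the opposite direction in Theorem \ref{thm:koop}, where invariant sets of positive measure produce nonconstant elements of the kernel. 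What your approach buys over the paper's citation is a self-contained quantitative statement: the Poincar\'e constant $\lambda$ is exactly the spectral gap the paper later uses as a degree of controllability.
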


	For the notion of $\epsilon-\delta$ domain and examples, we refer the reader to \cite{garofalo1998lipschitz}. When $\Omega$ is compact manifold without boundary this result is well known, but hard to find a specific source as the results follow very similarly as the case of the classical Riemannian Laplacian operator $\Delta$. Particularly, the local regularity of the solutions due to subellipticity of the operator \eqref{eq:nonlap} can be used to conclude global regularity, which implies the domain of the operator $\Delta_H$ is compact, which implies discreteness of the spectrum and hence invertibilty. The above result for domains with boundary can be found in \cite{nhieu2001neumann}, stated for the case when vector fields $\{g_i \}$ are left-invariant on a Lie group, homogeneous with respect to a dilation and the Lie algebra generated is nilpotent. Invertibility also holds for more general vector fields, as shown in \cite{elamvazhuthi2024denoising}, owing to the results of \cite{garofalo1998lipschitz}. This has also be observed in \cite{khesin2009nonholonomic} with only a partial proof. These results still hold if the system is controllable but not bracket generating as long as the control metric defined by system \eqref{eq:ctrsys} is complete and continuous with respect to the Euclidean distance. 
	
	Our goal in this note is to consider a converse version of the above statement. There are two views one can take on why the converse could be true. The Perron-Frobenius view and the Koopman view \cite{lasota2013chaos}. These approaches have been fruitful in using linear finite-dimensional methods for control of finite dimensional nonlinear systems \cite{vaidya2008lyapunov,mauroy2020koopman}. 
	\subsection{Perron-Frobenius view}
	
	In the Perron-Frobenius view, one looks at how a dynamical system pushes forward an ensemble of particles, represented by probability densities, under it's flow. In the context of control systems, the question translates to how one can use control to manipulate a family of initial conditions simultaneously.
	
	The invertibility of the operator $\Delta_H$ has been used in \cite{khesin2009nonholonomic} to adapt Moser's theorem for transporting densities under the action of the control system \eqref{eq:ctrsys}, by looking at the controllability of the corresponding continuity equation
	\begin{equation}
		\label{eq:lio}
		\frac{\partial \rho}{\partial t} + \nabla_x \cdot (\sum_{i}^m u_i(t,x)g_i(x) \rho) =0,
	\end{equation}
	which describes how an ensemble of initial conditions $x(0) $, distributed according to $\rho_0$, evolve under the action of the controls.
	
	While \cite{khesin2009nonholonomic} starts with the assumption of bracket generating property of \eqref{eq:ctrsys}, this assumption is not required, and one can instead just assume that the operator $\Delta_H$ is invertible, as stated in the form below. The proof then works the same way, except one just has to check the \eqref{eq:lio} is solved in a weak sense. See Section \ref{sec:PF}. One particularly has the following.
	
	\begin{theorem}
		\label{thm:ctycontrol}
		\textbf{(\cite{khesin2009nonholonomic} Invertibility of $\Delta_H$ implies controllability of continuity equation)} Suppose $\Delta_H$ has a bounded inverse on $L^2_{\perp}(\Omega)$. Let $\rho_0, \rho_1 \in L^2(\Omega)$ be functions such that integrate to $1$ over $\Omega$ and are uniformly bounded from below on $\Omega$ by a positive constant $c>0$. Then there exists control law $u :(0,1) \times \Rd \rightarrow \Rd$ such that $\mu_t$ is a solution to the continuity equation \eqref{eq:lio} where $\mu_t$ is given by 
		\begin{equation}
			\mu_t = \rho_0 + t(\rho_1 - \rho_0) \hspace{5mm} t\in [0,T]
		\end{equation}
		Moreover, a choice of controls is the following,
		\begin{equation}
			\label{eq:track}
			u_i(t,x) = \frac{X_i f(x)}{\rho(t,x)}
		\end{equation}
		for almost every $(t,x)\in (0,1) \times \Omega $
		where 
		\[f = \Delta_H^{-1}(\rho_1 - \rho_0)\]
	\end{theorem}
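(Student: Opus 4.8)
The plan is to run the Dacorogna--Moser / Moser deformation argument, adapted so that the transporting vector field is horizontal (a combination of the $g_i$) and the relevant Poisson equation is inverted by the sub-Laplacian rather than the ordinary Laplacian. Differentiating the affine interpolation gives $\partial_t\mu_t = \rho_1 - \rho_0$, so $\mu_t$ solves \eqref{eq:lio} precisely when the horizontal field $\sum_i u_i(t,\cdot)\,g_i$ satisfies $\nabla_x\cdot\big(\sum_i u_i g_i\,\mu_t\big) = \rho_0 - \rho_1$. I would look for this field in ``gradient'' form $\sum_i (X_i f)\,g_i$ for a scalar potential $f$, and then read off $u_i = X_i f/\mu_t$, which is exactly \eqref{eq:track}.

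The computation that makes this work is the identity relating $\Delta_H$ to a horizontal divergence. From $X_i^\ast\psi = -\sum_j\partial_{x_j}(g_i^j\psi) = -\nabla_x\cdot(g_i\psi)$ one gets, for any scalar $f$,
\[
\Delta_H f \;=\; -\sum_{i=1}^m X_i^\ast X_i f \;=\; \sum_{i=1}^m \nabla_x\cdot\big(g_i\,X_i f\big)\;=\;\nabla_x\cdot\Big(\sum_{i=1}^m (X_i f)\,g_i\Big).
\]
Since $\rho_0,\rho_1$ both integrate to $1$, the difference $\rho_0-\rho_1$ lies in $L^2_{\perp}(\Omega)$, on which $\Delta_H^{-1}$ is bounded by hypothesis; setting $f = \Delta_H^{-1}(\rho_0-\rho_1)$ we obtain $f\in\mathrm{dom}(\Delta_H)\subset W^{1,2}_H(\Omega)$, so in particular each $X_i f\in L^2(\Omega)$, and $\nabla_x\cdot\big(\sum_i (X_if)\,g_i\big) = \rho_0-\rho_1$. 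Because $\mu_t = \rho_0 + t(\rho_1-\rho_0)$ is a convex combination of $\rho_0,\rho_1\ge c>0$, we have $\mu_t\ge c>0$ on $[0,1]\times\Omega$, so the formula \eqref{eq:track} makes sense, $u_i\in L^2((0,1)\times\Omega)$, and crucially $\sum_i u_i(t,\cdot)\,g_i\,\mu_t = \sum_i (X_i f)\,g_i$ is independent of $t$ and lies in $L^2(\Omega)$. Plugging into \eqref{eq:lio}, $\partial_t\mu_t + \nabla_x\cdot\big(\sum_i u_i g_i\mu_t\big) = (\rho_1-\rho_0) + (\rho_0-\rho_1) = 0$, which is the claim. (The overall sign in \eqref{eq:track} is the one dictated by this cancellation and by the sign convention chosen for $\Delta_H$.)

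The one point that needs care — and where I would spend the write-up — is that $f\in\mathrm{dom}(\Delta_H)$ is not classically differentiable, so \eqref{eq:lio} must be read in the weak sense and the spatial integration by parts must be justified without boundary terms. Testing against $\phi\in C^\infty([0,1]\times\overline{\Omega})$ with $\phi(0,\cdot)=\phi(1,\cdot)=0$, the $\partial_t$ term contributes $-\int_0^1\!\!\int_\Omega\phi\,(\rho_1-\rho_0)\,dx\,dt$, while the flux term is
\[
\int_0^1\!\!\int_\Omega \nabla_x\phi\cdot\sum_i (X_i f)\,g_i \,dx\,dt\;=\;\int_0^1\!\!\int_\Omega \sum_i (X_i f)(X_i\phi)\,dx\,dt\;=\;-\int_0^1\!\!\int_\Omega \phi\,\Delta_H f\,dx\,dt,
\]
where the last step is the defining property of the self-adjoint (Neumann) realization of $\Delta_H$ underlying Theorem~\ref{thm:ctrbinv}: its Dirichlet form is $\mathcal E(f,\phi)=\sum_i\langle X_i f,X_i\phi\rangle_{L^2(\Omega)}$, so $\mathcal E(f,\phi) = -\langle\Delta_H f,\phi\rangle$ with no boundary contribution, the boundary condition being encoded in the form domain $W^{1,2}_H(\Omega)$. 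Adding the two terms gives $-\int\!\!\int\phi(\rho_1-\rho_0) - \int\!\!\int\phi(\rho_0-\rho_1) = 0$. It then remains to record the soft facts that $\mu_t\in C([0,1];L^2(\Omega))$ has the prescribed endpoints and is a probability density for every $t$ (both immediate from the affine formula and the lower bound $c>0$), so that $(\mu_t,u)$ is an admissible weak solution of \eqref{eq:lio}. The anticipated obstacles are therefore purely a matter of pinning down the correct function spaces on the $\epsilon$--$\delta$ domain $\Omega$ and the admissible class of test functions for \eqref{eq:lio}, rather than anything structural in the argument.
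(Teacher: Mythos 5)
Your proposal is correct and follows essentially the same route as the paper: the affine (Moser-type) interpolation, inversion of $\Delta_H$ on the zero-mean difference $\rho_1-\rho_0$, the identity $\Delta_H f = \nabla_x\cdot\big(\sum_i (X_i f)g_i\big)$ read through the Dirichlet form to avoid boundary terms, and the lower bound $\mu_t\ge c>0$ to make $u_i=X_if/\rho$ well defined and square-integrable. The only discrepancy is the sign of $f$ ($\Delta_H^{-1}(\rho_0-\rho_1)$ versus the paper's $\Delta_H^{-1}(\rho_1-\rho_0)$), which, as you note, is purely a matter of the sign convention for $\Delta_H$ (the paper's form-based definition and its differential expression differ by a sign), so this is not a substantive difference.
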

	The validity of this theorem can be seen by expressing equation \eqref{eq:lio} in the form 
	\begin{equation}
		\label{eq:lionew}
		\frac{\partial \rho}{\partial t} +  \sum_{i}^m X^*_i(u_i(t,x)\rho) =0.
	\end{equation}
	Then the the result can be seen formally by plugging in the control \eqref{eq:track} above.
	In fact, invertibility of $\Delta_H$ can be used to not just control between probability densities, but even exactly track trajectories of probability densities that are sufficiently regular \cite{elamvazhuthi2024denoising} (see also remark made in \cite{arguillere2017sub} for the case without boundary). The idea is that if $\rho(t,x)$ is a trajectory of probability densities, then the controls \\
	\[u_i(t,x) =\frac{X_i (\Delta_H^{-1} \partial_t\rho(t,x)) }{\rho(t,x)} \]
	achieves exact tracking for solution $\rho(t,x)$ of \eqref{eq:lio}, as long as the system \eqref{eq:ctrsys} is controllable.  This should be very surprising, as controllability at the ODE level \eqref{eq:ctrsys} from one point to another implies controllability of trajectories of \eqref{eq:lio} along probability densities, provided the densities are sufficiently regular.
	
	Due to scant results on the boundary regularity properties of solutions of $\Delta_Hu = f$ for function $f$, the flow generated by \eqref{eq:track} is not necessarily well defined (an incorrect statement made to the contrary in \cite{khesin2009nonholonomic} that the flow is a diffeomorphism). Therefore, a drawback is that the control constructed this way, even when \eqref{eq:ctrsys} is known to be controllable, is not guaranteed to be smooth (unless $\Omega$ is a compact manifold without boundary). Hence, the corresponding continuity equation \eqref{eq:lio} and ODE \eqref{eq:ctrsys} can develop non-unique solutions under the action of this control law, and relating solutions of \eqref{eq:ctrsys} and \eqref{eq:lio} doesn't follow from classical results on flows of ODEs with Lipschitz right hand sides. This is especially an issue when trying to prove a result in the converse direction since one cannot even conclude local regularity of the solutions in the interior of the domain from the invertibility of the operator $\Delta_H$.
	
	Despite this hurdle of non-smoothness, one can still establish a correspondence between the two, thanks to a superposition principle proved in \cite{ambrosio2005gradient}, as we show in this note. The goal is to prove the following result.

	\begin{theorem}
		\label{thm:invctrb}
		\textbf{(Main result: Invertibility implies approximate controllability to balls)}
		Suppose $\Delta_H$ is invertible on $L^2_{\perp}(\Omega)$.  Then the set of states ${\rm Reach}^{-1} (B_R(y)) \subseteq \Omega$ that can reach $B_R(y)$, the open ball around $y$, along trajectories of \eqref{eq:ctrsys} has full Lebesgue measure in $\Omega$, for every $R>0$ and every $ y \in \Omega$.
	\end{theorem}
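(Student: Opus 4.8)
The plan is to combine Theorem~\ref{thm:ctycontrol} with the superposition principle of Ambrosio--Gigli--Savar\'e \cite{ambrosio2005gradient}. Fix $R>0$, $y\in\Omega$, and a small parameter $\ep\in(0,1)$; since $\Omega$ is open and bounded, $0<|B_R(y)\cap\Omega|\le|\Omega|<\infty$. I will transport from the uniform density $\rho_0=\mathbf{1}_\Omega/|\Omega|$ to
\begin{equation}
  \rho_1=(1-\ep)\,\frac{\mathbf{1}_{B_R(y)\cap\Omega}}{|B_R(y)\cap\Omega|}+\ep\,\frac{\mathbf{1}_\Omega}{|\Omega|},
\end{equation}
which concentrates mass $\ge 1-\ep$ on $B_R(y)$ yet stays bounded below by $\ep/|\Omega|>0$ on all of $\Omega$ --- this uniform lower bound being exactly what the hypotheses of Theorem~\ref{thm:ctycontrol} demand, and the reason the ball cannot simply be taken as the full target. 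Both densities lie in $L^2(\Omega)$ and integrate to $1$, so Theorem~\ref{thm:ctycontrol} supplies a control $u$, with $u_i(t,x)=X_if(x)/\rho(t,x)$ and $f=\Delta_H^{-1}(\rho_1-\rho_0)\in\mathrm{Dom}(\Delta_H)$, for which $\mu_t:=\rho_0+t(\rho_1-\rho_0)$, $t\in[0,1]$, is a weak solution of \eqref{eq:lio}. Each $\mu_t$ is a probability measure on $\Rd$ supported in $\overline\Omega$, satisfies $\mu_t\ge\ep/|\Omega|$ on $\Omega$, and $t\mapsto\mu_t$ is Lipschitz (hence narrowly) continuous; moreover $\mu_0$ and $\mu_1$ are densities on $\Omega$, so they charge neither $\partial\Omega$ nor $\Rd\setminus\overline\Omega$.

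Next I apply the superposition principle to the transport form $\partial_t\mu_t+\nabla\cdot(b_t\mu_t)=0$ of \eqref{eq:lio}, with $b_t(x):=\sum_{i=1}^m u_i(t,x)g_i(x)$ (the overall sign being immaterial since \eqref{eq:ctrsys} has no drift). The integrability hypothesis holds: $f$, lying in $\mathrm{Dom}(\Delta_H)$, belongs to the form domain of the sub-Laplacian, so $\sum_i\|X_if\|_{L^2(\Omega)}^2=\langle-\Delta_Hf,f\rangle<\infty$, and then, using $\rho\ge\ep/|\Omega|$ and $\sup_{\overline\Omega}|g_i|<\infty$,
\begin{equation}
  \int_0^1\!\!\int_\Omega|b_t(x)|^2\,d\mu_t(x)\,dt\ \lesssim\ \sum_{i=1}^m\|X_if\|_{L^2(\Omega)}^2\ <\ \infty,
\end{equation}
with implicit constant depending only on $\ep$, $\Omega$ and $\max_i\sup_{\overline\Omega}|g_i|$; a fortiori the corresponding $L^1$ bound holds since the $\mu_t$ are probabilities. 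Hence \cite[Thm.~8.2.1]{ambrosio2005gradient} yields $\eta\in\mathcal{P}(C([0,1];\Rd))$, concentrated on absolutely continuous curves, with $(e_t)_\#\eta=\mu_t$ for every $t\in[0,1]$ and with $\eta$-a.e.\ $\gamma$ solving $\dot\gamma(t)=b_t(\gamma(t))$ for a.e.\ $t$, where $e_t(\gamma):=\gamma(t)$.

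I then isolate the ``good'' curves. From $(e_t)_\#\eta=\mu_t$ and $\operatorname{supp}\mu_t\subset\overline\Omega$ one gets $\gamma(t)\in\overline\Omega$ for all $t$, for $\eta$-a.e.\ $\gamma$ (test on a countable dense set of times and use continuity); from $\mu_0=\rho_0$ that $\gamma(0)\in\Omega$ for $\eta$-a.e.\ $\gamma$; and $\eta(\{\gamma(1)\in B_R(y)\cap\Omega\})=\mu_1(B_R(y)\cap\Omega)\ge1-\ep$. Also, by Fubini and $(e_t)_\#\eta=\mu_t$,
\begin{equation}
  \int\Big(\int_0^1\sum_{i=1}^m|u_i(t,\gamma(t))|^2\,dt\Big)d\eta(\gamma)=\int_0^1\!\!\int_\Omega\sum_{i=1}^m\frac{|X_if(x)|^2}{\rho(t,x)}\,dx\,dt<\infty,
\end{equation}
so $t\mapsto v(t):=(u_i(t,\gamma(t)))_{i=1}^m$ lies in $L^2(0,1;\R^m)\subset L^1(0,1;\R^m)$ for $\eta$-a.e.\ $\gamma$. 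Let $G$ be the set of $\gamma$ satisfying all of the above and solving $\dot\gamma=b_\cdot(\gamma)$ a.e.; then $\eta(G)\ge1-\ep$, and each $\gamma\in G$ is an admissible trajectory of \eqref{eq:ctrsys} driven by the $L^1$ control $v$ (equivalently $-v$, depending on the sign convention), contained in $\overline\Omega$, with $\gamma(1)\in B_R(y)$. Hence $\gamma(0)\in{\rm Reach}^{-1}(B_R(y))$, i.e.\ $e_0(G)\subset{\rm Reach}^{-1}(B_R(y))$.

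It remains to quantify. The set $e_0(G)$ is the continuous image of a Borel set, hence analytic and Lebesgue measurable (alternatively, by inner regularity of $\eta$ replace $G$ by a compact subset of almost full $\eta$-measure, whose image is then compact); since $(e_0)_\#\eta$ is the uniform probability measure on $\Omega$ and $e_0(G)\subset\Omega$,
\begin{equation}
  \frac{|e_0(G)|}{|\Omega|}=(e_0)_\#\eta\big(e_0(G)\big)=\eta\big(e_0^{-1}(e_0(G))\big)\ge\eta(G)\ge1-\ep .
\end{equation}
Taking $\ep=1/n$ for $n\in\mathbb{N}$ and denoting the corresponding set by $G_n$, the union $\bigcup_n e_0(G_n)$ is a Lebesgue-measurable subset of ${\rm Reach}^{-1}(B_R(y))$ with $\big|\bigcup_n e_0(G_n)\big|=|\Omega|$, so ${\rm Reach}^{-1}(B_R(y))$ has full Lebesgue measure in $\Omega$. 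The main obstacle is precisely this passage from the abstract, non-smooth continuity equation to honest ODE trajectories: one must check the integrability hypothesis of \cite{ambrosio2005gradient} --- which forces $\rho_1$ to stay bounded below, hence the $\ep$-perturbation and the final limit, and which is where the form-domain bound $\sum_i\|X_if\|_{L^2}^2<\infty$ enters --- and then verify that the curves produced by superposition, a priori merely absolutely continuous in $\Rd$ and carrying no regularity of the selected control, are genuine admissible trajectories that stay in $\overline\Omega$, which relies on the absolute continuity of the marginals $\mu_t$ together with $\operatorname{supp}\mu_t\subset\overline\Omega$.
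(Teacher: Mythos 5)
Your proof is correct and follows essentially the same route as the paper: transport the uniform density to a target density concentrated on $B_R(y)$ but bounded below via Theorem~\ref{thm:ctycontrol}, invoke the Ambrosio--Gigli--Savar\'e superposition principle to turn the weak solution of the continuity equation into a measure on admissible ODE trajectories, and conclude that the set of initial points reaching the ball has nearly full measure before letting the concentration parameter go to zero. The only cosmetic differences are your choice of an indicator mixture rather than a truncated Gaussian for the target density, and your direct lower bound on $|e_0(G)|$ with a union over $\ep=1/n$ in place of the paper's contradiction argument via Lemma~\ref{lem}.
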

	
	The result can be stated in an alternative way using the spectral gap $\lambda$ of the operator $\Delta_H$. This is attractive from a computational point of view, as one arrives at a (infinite-dimensional) convex test for nonlinear controllability. Here $WH^{1}_{\Omega}$ is the Horizontal Sobolev space defined in Section \ref{sec:not}.
	
	\begin{corollary}
		\textbf{(Spectral gap as a degree of controllability)}
		Let $\lambda >0 $. Suppose $\Delta_H$ satisfies the Poincar\'e inequality
		\begin{equation} 
			\label{eq:poin}
			\int_{\Omega} |f(x)|^2dx \leq \frac{1}{\lambda} \int_{\Omega}\sum_{i=1}^m |X_i f (x)|^2dx 
		\end{equation}
		for all $f \in WH^1_{\perp}(\Omega)$\footnote{This is the {\it Horizontal} Sobolev space of zero mean square integrable functions that are weakly differentiable along the $g_i$. See section \ref{sec:not}}. Then the set of states ${\rm Reach}^{-1} (B_R(y)) \subseteq \Omega$ that can reach $B_R(y)$ along trajectories of \eqref{eq:ctrsys} has full Lebesgue measure in $\Omega$, for every $R>0$ and every $y \in \Omega$.
		
	\end{corollary}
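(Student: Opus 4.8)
The plan is to observe that the Poincar\'e inequality \eqref{eq:poin}, assumed to hold on the whole space $WH^1_{\perp}(\Omega)$ with constant $\lambda>0$, is exactly the hypothesis needed to guarantee that $\Delta_H$ has a bounded inverse on $L^2_{\perp}(\Omega)$; once that is in hand, the conclusion is immediate from Theorem \ref{thm:invctrb}. So this corollary is really a statement that the spectral gap condition \eqref{eq:poin} is an explicit, checkable sufficient condition for the abstract invertibility hypothesis of the main theorem.

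First I would set up the symmetric bilinear form
\[
a(f,g) \;=\; \sum_{i=1}^m \int_{\Omega} X_i f(x)\, X_i g(x)\, dx, \qquad f,g \in WH^1_{\perp}(\Omega),
\]
which is the Dirichlet form associated with the sub-Laplacian \eqref{eq:nonlap}, so that $a(f,g) = -\langle \Delta_H f, g\rangle_{L^2(\Omega)}$ formally, by the definition of $X_i^{*}$. Continuity of $a$ on $WH^1_{\perp}(\Omega)$ is immediate from Cauchy--Schwarz and the definition of the horizontal Sobolev norm in Section \ref{sec:not}. The content of \eqref{eq:poin} is coercivity: it gives $a(f,f) \ge \lambda\, \|f\|_{L^2(\Omega)}^2$, and combined with $\|f\|_{WH^1}^2 = \|f\|_{L^2(\Omega)}^2 + a(f,f)$ this yields $a(f,f) \ge \tfrac{\lambda}{1+\lambda}\,\|f\|_{WH^1}^2$, so $a$ is coercive on the Hilbert space $WH^1_{\perp}(\Omega)$. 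Then, for any $\rho \in L^2_{\perp}(\Omega)$, the functional $g \mapsto \int_{\Omega}\rho\, g\, dx$ is bounded on $WH^1_{\perp}(\Omega)$, and the Lax--Milgram theorem produces a unique $f \in WH^1_{\perp}(\Omega)$ with $a(f,g) = \int_{\Omega}\rho\, g\, dx$ for all $g$; this $f$ is by definition the zero-mean weak solution of $-\Delta_H f = \rho$. Testing against $g=f$ and invoking \eqref{eq:poin} gives $\lambda\,\|f\|_{L^2(\Omega)}^2 \le a(f,f) = \int_{\Omega}\rho f\, dx \le \|\rho\|_{L^2(\Omega)}\|f\|_{L^2(\Omega)}$, hence $\|f\|_{L^2(\Omega)} \le \tfrac{1}{\lambda}\|\rho\|_{L^2(\Omega)}$. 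Thus $\rho \mapsto f$ is a bounded inverse of $\Delta_H$ on $L^2_{\perp}(\Omega)$ with operator norm at most $1/\lambda$, and Theorem \ref{thm:invctrb} applies, giving that ${\rm Reach}^{-1}(B_R(y))$ has full Lebesgue measure in $\Omega$ for every $R>0$ and $y\in\Omega$.

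The hard part will not be any of the above estimates but the bookkeeping needed to match the variational (form) notion of invertibility just produced with the operator-theoretic sense of ``$\Delta_H$ invertible on $L^2_{\perp}(\Omega)$'' used in the statement of Theorem \ref{thm:invctrb}: one must check that the self-adjoint operator $\Delta_H$ appearing there is precisely the one associated, via the first representation theorem, with the closed densely defined form $a$ on the form domain $WH^1_{\perp}(\Omega)$ (its Neumann/Friedrichs realization), and that the range of this operator is all of $L^2_{\perp}(\Omega)$, not merely a dense proper subspace. This in turn reduces to verifying that $a$ is closed, which is again a direct consequence of \eqref{eq:poin} — coercivity makes $\sqrt{a(\cdot,\cdot)}$ equivalent to the $WH^1_{\perp}(\Omega)$ norm, and completeness of $WH^1_{\perp}(\Omega)$ is built into its definition. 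Everything else is routine.
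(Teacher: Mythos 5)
Your proposal is correct and matches the route the paper intends: the paper gives no separate proof of this corollary, but its surrounding discussion (the explicit invocation of Lax--Milgram and the identification of $\lambda$ with the spectral gap) makes clear that the intended argument is exactly yours, namely that \eqref{eq:poin} is coercivity of the form $\sigma$ on $WH^1_{\perp}(\Omega)$, Lax--Milgram then yields a bounded inverse of $\Delta_H$ on $L^2_{\perp}(\Omega)$ with norm at most $1/\lambda$, and Theorem \ref{thm:invctrb} finishes. Your closing remark about matching the variational solution with the form-defined operator is the right point to check (it reduces to decomposing a test function $\phi \in WH^1(\Omega)$ into its mean-zero part plus a constant, both of which are annihilated appropriately since $\rho$ has zero mean and constants lie in the kernel of each $X_i$), and it goes through.
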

	
	An advantage of this formulation is that the value $\lambda$ gives a notion of degree of controllability. The spectrum of the operator satisfies ${\rm spec}(-\Delta_H) \subseteq \{ 0 \} \cup [\lambda, \infty)$ (the restriction of the operator on $L^2_{\perp}(\Omega)$ has its spectrum contained in $[\lambda, \infty$). If $\rho_0$ and $\rho_1$ are probability densities are positive everywhere on $\Omega$ and essentially bounded on $\Omega$. Let $f = \rho_1 - \rho_0$. Then from Lax-Milgram theorem \cite{clement2016lax} we have the bounds
	\[  \int_{\Omega}\sum_{i=1}^m |X_i u(x)|^2  \leq  \frac{1}{\lambda}  \|f\|^2_2.\]
	
	Therefore, this gives us a bound for the feedback control \eqref{eq:track}. Smaller $\lambda>0$ implies the operator $\Delta_H$ is close to being non-invertible and hence resulting in a higher difficulty in controlling the system \eqref{eq:lio}, and therefore approximately controlling \eqref{eq:ctrsys}. One can think of the $\Delta_H$ as a nonlinear analogue of the controllability grammian for linear time invariant control systems \cite{hespanha2018linear}.
	
	\subsection{Koopman View}
	
	Another view that one can take on this matter is using the dual Koopman point of view by looking at how functions evaluated along the flow evolve. Suppose $A$ is an invariant set such that $B:=\Omega-A$ is also invariant along the flows along each of the vector fields $g_i$, then formally
	\begin{eqnarray}
		\label{eq:dertoint}
		X_i \xi_A  = \lim_{t \rightarrow 0} \frac{\xi_A (e^{t g_i}(x))- \xi_A(x)}{t} = 0  \\
		X_i \xi_B  = \lim_{t \rightarrow 0} \frac{\xi_B (e^{t g_i}(x))- \xi_B(x)}{t} = 0 \nonumber 
	\end{eqnarray}
	for all $x \in \Omega$, where $\xi_A$ and $\xi_B$ represent the characteristic functions of $A$, and $B$, respectively and $e^{t g_i}$represents the flow map along $g_i$.
	
	Therefore,  $X_i(\frac{1}{\int_A dx}\xi_A-(\frac{1}{\int_B dx}\xi_B)=0$ for each $i = 1,..,m$. This implies $\Delta_H$ is not injective on the set of square integrable functions that integrate to $0$. To formalize this argument, one needs to ensure that  $\xi_A $ and $\xi_B$ are weakly differentiable, since they are clearly not differentiable in the classical sense, and that the first and second equality in the relation \eqref{eq:dertoint} hold true rigorously. This is done in Section \ref{sec:koop}.
	
	Lastly, one would hope that this approximate notion of controllability automatically implies exact controllability of \eqref{eq:ctrsys}. However, in general, the reachable sets of \eqref{eq:ctrsys} could be dense in $\Omega$ without the system being (exactly) controllable. For example, the irrational winding on the torus. See \cite{agrachev2013control}. However, this can possibly be avoided for bilinear systems, and left invariant systems on Lie groups \cite{cannarsa2021approximately}.
	
	\section{Notation and Background}
	\label{sec:not}
	Here on, we will only require that the vector fields $g_i \in C^2(\bar{\Omega};\Rd)$. In this section, we define some notation and some results that will be used to prove Theorem \ref{thm:invctrb}.
	Let $B_R(y)$ denote the open ball of around a point $x \in \Omega$. The space $L^2(\Omega)$ is the set of square integrable functions. We will say that function $f \in L^2(\Omega)$ has weak derivative $u$ along the vector field $g_i$, denoted by $X_if \in L^2(\Omega)$, if
	
	\begin{equation}
		\label{eq:wkderdef}
		\int_{\Omega}u(x) \phi(x)dx = \int_{\Omega}f(x)X^*_i\phi(x)dx
	\end{equation}
	
	for all $\phi \in C^{\infty}_0(\Omega)$.

	We define the space $WH^1(\Omega) = \{ u \in L^2(\Omega); X_i u \in L^2(\Omega) \}$ equipped with the Sobolev norm $\|f\|_{WH^1(\Omega)} = \|f\|_2 + \sum_{i=1}^m\|X_if\|_2$. Additionally, $WH^1_{\perp} : = L^2_{\perp}(\Omega) \cap WH^1(\Omega)$. We will need the form $\sigma: WH^1(\Omega) \times WH^1(\Omega ) \rightarrow \mathbb{R}$. Associated with this form, we define the operator $\Delta_H : \mathcal{D}(\Delta_H) \rightarrow L_2(\Omega)$ with domain
	\[D(\Delta_H): \{ u \in \mathcal{D}(\sigma);\exists f \in L^2(\Omega) ~s.t.~\sigma(u,\phi) = \langle f,\phi \rangle_2 ~ \forall \phi \in WH^1(\Omega)\}\]
	with the operator defined by 
	\[\Delta_H u = f\]
	if 
	\begin{equation}
		\sigma(u,\phi) = \langle f,\phi\rangle_2
		~~\forall \phi \in WH^1(\Omega)
	\end{equation}
	
	We will also need the notion of invertibility of an operator. We will say that the operator $\Delta_H$ is invertible on $L^2_{\perp}(\Omega)$, if there exists an operator $\Delta_H^{-1} : L^2_{\perp} (\Omega)\rightarrow L^2_{\perp}(\Omega)$ such that $\Delta_H \Delta_H^{-1}x = x$ for all $x \in L^2_{\perp}$ and $\Delta^{-1}_H$ is a bounded linear operator. Sometimes, instead of invertibility, we will be happy with injectivity of the operator $\Delta_H$. In general, injectivity is weaker than invertibility for operators on infinite dimensional spaces. For self-adjoint operators, this is the case when the range of the operator is dense without being closed.
	
	An immediate consequence of these definitions is the following result on basic properties of the operator $\Delta_H$.

	\begin{lemma}
		The space $WH^1(\Omega)$ is complete under the norm $\|\cdot\|_{WH^1(\Omega)}$, and
		$\sigma(u,u) \geq 0$
		for all $u \in WH^1(\Omega)$. Hence, the form $\sigma$ is closed and semibounded.  As a consequence, the operator $\Delta_{H}$ is self-adjoint. 
	\end{lemma}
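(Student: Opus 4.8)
The plan is to prove the three assertions in sequence, deducing self-adjointness from the first two via the first representation theorem for symmetric forms. \emph{Completeness of $WH^1(\Omega)$:} I would start from a Cauchy sequence $(u_n)$ in $WH^1(\Omega)$. By the definition of $\|\cdot\|_{WH^1(\Omega)}$, both $(u_n)$ and each sequence $(X_i u_n)$, $i=1,\dots,m$, are Cauchy in $L^2(\Omega)$; let $u$ and $v_i$ be their $L^2$-limits. The only point to verify is that $v_i$ is the weak derivative $X_i u$ in the sense of \eqref{eq:wkderdef}. Fixing $\phi \in C_0^\infty(\Omega)$ and writing $\int_\Omega (X_i u_n)\phi\,dx = \int_\Omega u_n\,X_i^*\phi\,dx$ for each $n$, I would pass to the limit on both sides; this is legitimate because $g_i \in C^2(\bar\Omega;\mathbb{R}^d)$, so $X_i^*\phi = -\sum_j \partial_{x_j}(g_i^j\phi)$ is a continuous, compactly supported function, hence lies in $L^2(\Omega)$. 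The resulting identity $\int_\Omega v_i\phi\,dx = \int_\Omega u\,X_i^*\phi\,dx$ for all such $\phi$ shows $X_i u = v_i \in L^2(\Omega)$, so $u \in WH^1(\Omega)$ and $u_n \to u$ in $WH^1(\Omega)$.

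\emph{Nonnegativity and closedness of $\sigma$:} Since $\sigma$ is the Dirichlet form associated with $\Delta_H$, namely $\sigma(u,v) = \sum_{i=1}^m\langle X_i u, X_i v\rangle_{L^2(\Omega)}$, one reads off $\sigma(u,u) = \sum_{i=1}^m\|X_i u\|_2^2 \ge 0$ directly, so $\sigma$ is symmetric and bounded below by $0$, i.e. semibounded. Its form norm $u \mapsto (\sigma(u,u) + \|u\|_2^2)^{1/2}$ is equivalent to $\|\cdot\|_{WH^1(\Omega)}$ — this is just the standard comparison between the $\ell^1$- and $\ell^2$-combinations of the finitely many quantities $\|u\|_2,\|X_1 u\|_2,\dots,\|X_m u\|_2$ — so the completeness established in the previous step says precisely that $\mathcal{D}(\sigma) = WH^1(\Omega)$ is complete in the form norm, which is the definition of $\sigma$ being closed.

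\emph{Self-adjointness of $\Delta_H$:} Because $C_0^\infty(\Omega) \subseteq WH^1(\Omega)$ and $C_0^\infty(\Omega)$ is dense in $L^2(\Omega)$, the form $\sigma$ is densely defined. A densely defined, symmetric, semibounded, closed form possesses, by Kato's first representation theorem, a unique associated self-adjoint operator, and by construction that operator is exactly the $\Delta_H$ defined above via $\sigma(u,\phi) = \langle \Delta_H u,\phi\rangle_2$; hence $\Delta_H$ is self-adjoint. The only step that genuinely requires care is the limit passage in the completeness argument: one needs $X_i^*\phi$ to be an admissible $L^2$ test object, and this is exactly where the $C^2$ regularity of the vector fields (so that $\partial_{x_j} g_i^j$ exists and is bounded on $\bar\Omega$), rather than mere Lipschitz regularity, is used. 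The remaining manipulations are routine Sobolev-space and closed-form bookkeeping.
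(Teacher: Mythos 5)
Your proof is correct and follows exactly the route the paper intends: the paper itself only cites \cite{bramanti2022hormander} for completeness and invokes the "classical correspondence" between closed semibounded forms and self-adjoint operators, and your Cauchy-sequence limit passage plus Kato's first representation theorem is precisely the standard content of those references. The only cosmetic quibble is that Lipschitz (or $C^1$) regularity of the $g_i$ already makes $X_i^*\phi$ an admissible $L^2$ test object, so the $C^2$ hypothesis is not strictly needed at that step.
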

	The completeness of property of the space $WH^1(\Omega)$ can be found in \cite{bramanti2022hormander}. The rest is a classical correspondence between closedness and semiboundedness of a form and self-adjointness of the corresponding operator.

	We will relate solutions of \eqref{eq:ctrsys} to \eqref{eq:lio} through probability measures defined on the space of curves. Toward this end, let $\Gamma : = AC^2([0,1];\Rd)$ be the set of absolutely continuous curves $x$, with $|\dot{x}| \in L^2(0,1;\Rd)$.
	We define the evaluation map $e_t:\Rd \times \Gamma \rightarrow \Rd$ given by,
	\begin{equation}
		e_t(x,\gamma) = \gamma(t) 
	\end{equation}
	for all $t \in [0,1]$. The map $e_t$ takes any measure $\eta$ on $\Rd \times \Gamma$ and defines a measure on $\Rd$ through the pushforward operation, defined by
	\[(e_t)_{\#}\eta(A) = \eta(e_t^{-1}(A))\]
	for all measurable $A \subseteq \Rd$. We define the set $\Gamma_{ad} \subset \Gamma$ by
	\begin{align*}
		\Gamma_{ad}=\lbrace &(x,\gamma) \in \Gamma; \exists u \in L^2(0,1;\mathbb{R}^m), \nonumber 
		\\ & {\rm st.t.}~ \dot{\gamma}(t) = \sum_{i=1}^m u_i(t)g_i(t) ~{\rm and}~~\gamma(0)=x,~~\gamma(t) \in \Omega ~~\forall t \in [0,1],  \rbrace
	\end{align*}
	Let $v:[0,T] \times \Rd \rightarrow \Rd$ be a vector field. We recall the weak notion of solution of the continuity equation. Let $t \mapsto \mu_t$ be a family of probability measures.
	Consider the equation
	\begin{equation}
		\label{eq:nocctyeq}
		\partial_t \mu_t + \nabla_x \cdot (v(t,x)\mu_t) = 0 \hspace{2mm} \text{in} ~~ \Rd \times (0,1)
	\end{equation}
	We will say that $\mu_t$ is a distributional solution to \eqref{eq:nocctyeq} if $\mu_t$ is a solution to \eqref{eq:nocctyeq} if
	\begin{equation}
		\int_0^1 \int_{\Rd} \big (\partial_t \phi(x,t) + v(t,x) \cdot \nabla_x \phi(x,t) \big ) d\mu_t(x)dt
	\end{equation}
	for all $\phi \in C^{\infty}_c(\Rd \times (0,T))$
	
	The following result will be useful in the sequel. The controls we construct will be possibly irregular. Thus negating the possibility of relating solutions of \eqref{eq:ctrsys} and \eqref{eq:lio} using classical results on constructing solution of \eqref{eq:lio} using the flow map of the equation \eqref{eq:ctrsys}.
	
	\begin{theorem}(\textbf{\cite{ambrosio2005gradient} Superposition principle})
		Let $t \mapsto \mu_t$ be a narrowly continuous curve on the space of probability measures on $\Rd$ satisfying the continuity equation \eqref{eq:nocctyeq} satisfying
		\begin{equation}
			\int_0^1\int_{\Rd} |v(t,x)|^2d\mu_t(x) < \infty
		\end{equation}
		Then there exists a probability measure $\eta$ in $\Rd \times \Gamma$ such that $\eta$ is concentrated on the pairs $(x,\gamma) \in \Rd \times \Gamma$ such that $\gamma$ is a solution of the differential equation 
		\begin{equation}
			\dot{\gamma}(t) = v(t,\gamma(t))~~\gamma(0)=x.
		\end{equation}
		and $\mu_t = (e_t)_{\#} \eta$ for all $t \in [0,1]$.
	\end{theorem}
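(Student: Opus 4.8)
The plan is to establish the representation by a regularize--compactness--limit scheme, following \cite{ambrosio2005gradient}. First I would mollify in space: fix a smooth strictly positive even kernel $\rho_\epsilon$ on $\Rd$ and set $\mu_t^\epsilon := \mu_t * \rho_\epsilon$ (a smooth positive density) and $v_t^\epsilon := \big((v_t\mu_t)*\rho_\epsilon\big)/\mu_t^\epsilon$. A direct computation shows that $(\mu^\epsilon,v^\epsilon)$ again solves the continuity equation \eqref{eq:nocctyeq} distributionally, that $v_t^\epsilon$ is smooth in $x$ (so that the regularized ODE will have a well-defined flow on $\{\mu_t^\epsilon>0\}$), and --- crucially --- that joint convexity of the kinetic-energy density $(\rho,q)\mapsto |q|^2/\rho$ together with Jensen's inequality yields the energy bound
\begin{equation}
	\int_0^1\!\int_{\Rd}|v_t^\epsilon(x)|^2\,d\mu_t^\epsilon(x)\,dt \;\le\; \int_0^1\!\int_{\Rd}|v(t,x)|^2\,d\mu_t(x)\,dt,
\end{equation}
uniformly in $\epsilon$.

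Because $v^\epsilon$ is locally Lipschitz, the ODE $\dot\gamma(t)=v_t^\epsilon(\gamma(t))$ has a well-defined flow, and I would define the lifted measure $\eta_\epsilon := (\mathrm{id}\times\Phi^\epsilon)_{\#}\mu_0^\epsilon$ on $\Rd\times\Gamma$, where $\Phi^\epsilon(x)$ denotes the integral curve of $v^\epsilon$ starting at $x$. By construction $\eta_\epsilon$ is concentrated on exact solutions of the regularized ODE and satisfies $\mu_t^\epsilon=(e_t)_{\#}\eta_\epsilon$ for every $t$. The next step is tightness of $\{\eta_\epsilon\}$ in $\mathcal P(\Rd\times\Gamma)$: the energy bound above controls $\int\!\big(\int_0^1|\dot\gamma|^2\,dt\big)\,d\eta_\epsilon$ uniformly, which forces equi-$1/2$-H{\"o}lder continuity of the curves in the support; combined with tightness of the initial marginals $\mu_0^\epsilon$, an Arzel\`a--Ascoli argument gives tightness, and I extract a narrowly convergent subsequence $\eta_\epsilon\rightharpoonup\eta$, whose curves lie in $\Gamma=AC^2$ by lower semicontinuity of the energy.

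It remains to identify the limit. Passing $\mu_t^\epsilon=(e_t)_{\#}\eta_\epsilon$ to the limit, using continuity of $e_t$ and $\mu_t^\epsilon\to\mu_t$, gives $\mu_t=(e_t)_{\#}\eta$. To show $\eta$ is concentrated on solutions of the \emph{un-mollified} ODE, I would prove that the defect functional $\int\big(\int_0^1\big|\gamma(t)-\gamma(0)-\int_0^t v_s(\gamma(s))\,ds\big|\wedge 1\,dt\big)\,d\eta$ vanishes. The obstacle --- and the main difficulty of the whole argument --- is that $v$ is only defined $\mu_t$-a.e.\ and is merely square integrable, so the nonlinear map $\gamma\mapsto\int_0^t v_s(\gamma(s))\,ds$ is \emph{not} narrowly continuous and cannot be passed directly to the limit. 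I would circumvent this by approximating $v$ in $L^2(d\mu_t\,dt)$ by bounded continuous fields $w$ via Lusin's theorem: for each fixed $w$ the corresponding defect functional is bounded and continuous, hence lower semicontinuous under $\eta_\epsilon\rightharpoonup\eta$; along $\eta_\epsilon$ the curves solve $\dot\gamma=v^\epsilon$, so the $w$-defect reduces to $\int_0^t(v_s^\epsilon-w_s)(\gamma(s))\,ds$, whose $\eta_\epsilon$-average is controlled by $\|v^\epsilon-w\|_{L^1(d\mu_t^\epsilon\,dt)}$ through $\mu_t^\epsilon=(e_t)_{\#}\eta_\epsilon$ and the uniform energy bound. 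Letting first $\epsilon\to 0$ and then $w\to v$ drives the defect to zero, so $\eta$-a.e.\ pair $(x,\gamma)$ satisfies $\dot\gamma=v_t(\gamma)$ with $\gamma(0)=x$. Reconciling the narrow limit with the merely measurable vector field in this concentration step is the crux; everything else is the standard mollification and compactness machinery.
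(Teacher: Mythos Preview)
The paper does not give its own proof of this statement: it is quoted verbatim as a result from \cite{ambrosio2005gradient} and used as a black box in the proof of Theorem~\ref{thm:invctrb}. Your outline is precisely the standard Ambrosio--Gigli--Savar\'e argument (mollification in space, Jensen for the Benamou--Brenier energy, tightness via the uniform action bound and Arzel\`a--Ascoli, then identification of the limit through a Lusin-type approximation of $v$), so there is nothing to compare --- you have reproduced the cited proof rather than diverged from anything in the paper.
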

	In the context of control systems, this result can be specialized in the following trivial way.

	\begin{corollary}
		\label{cor}
		Let $t \mapsto \mu_t$ be a narrowly continuous curve on the space of probability measures on $\Rd$ that is supported on $\Omega$ for all $t \in [0,1]$. Additionally, suppose the curve satisfies the continuity  equation \eqref{eq:nocctyeq} for $v(t,x) = \sum^m_i u_i(t,x)g_i(x)$  for a feedback control law $u : [0,1] \times \Rd \rightarrow \R^m$ and
		\begin{equation}
			\int_0^1\int_{\Rd} \sum_{i=1}^m|u_i(t,x)|^2d\mu_t(x) < \infty
		\end{equation}
		Then there exists a probability measure $\eta$ in $\Rd \times \Gamma_{ad}$ such that $\eta$ is concentrated on the pairs $(x,\gamma) \in \Rd \times \Gamma_{ad}$ such that $\gamma$ is a solution of the differential equation 
		\begin{equation}
			\dot{\gamma}(t) = \sum_{i=1}^m u_i(t,\gamma(t))g_i(\gamma(t))~~\gamma(0)=x.
		\end{equation}
		and $\mu_t = (e_t)_{\#} \eta$ for all $t \in [0,1]$.
	\end{corollary}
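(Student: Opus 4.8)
The plan is to invoke the Superposition Principle with the particular vector field $v(t,x) = \sum_{i=1}^m u_i(t,x) g_i(x)$ and then verify, a posteriori, that the resulting measure $\eta$ is concentrated on $\Rd \times \Gamma_{ad}$ rather than merely on $\Rd \times \Gamma$.

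First I would check the hypothesis of the Superposition Principle, namely $\int_0^1 \int_{\Rd} |v(t,x)|^2 \, d\mu_t(x)\,dt < \infty$. Since $\mu_t$ is concentrated on the bounded set $\Omega$ and $g_i \in C^2(\bar\Omega;\Rd)$, the constant $C := m \max_{i} \sup_{x\in\bar\Omega} |g_i(x)|^2$ is finite, and by Cauchy--Schwarz $|v(t,x)|^2 \le C \sum_i |u_i(t,x)|^2$ for $\mu_t$-a.e.\ $x$; integrating and using the hypothesis of the statement gives finiteness. If one insists that $v$ be defined on all of $\Rd$, extend $g_i$ and $u_i$ by zero outside $\bar\Omega$; since $\mu_t$ does not charge $\Rd\setminus\Omega$, this alters neither the continuity equation nor the conclusion. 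The Superposition Principle then produces $\eta$ on $\Rd\times\Gamma$, concentrated on pairs $(x,\gamma)$ with $\dot\gamma(t) = v(t,\gamma(t)) = \sum_i u_i(t,\gamma(t)) g_i(\gamma(t))$ and $\gamma(0)=x$, and with $(e_t)_\#\eta = \mu_t$ for all $t$.

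It then remains to upgrade $\Gamma$ to $\Gamma_{ad}$, which is two verifications. For the integrability of the realized control, I would use the identity $\int_{\Rd\times\Gamma} \psi(\gamma(t))\, d\eta = \int_{\Rd} \psi\, d\mu_t$ (from $(e_t)_\#\eta=\mu_t$, measurability of $u$ being implicit in the hypothesis) together with Tonelli's theorem:
\[
\int_{\Rd\times\Gamma}\Big(\int_0^1 \sum_i |u_i(t,\gamma(t))|^2\,dt\Big) d\eta
= \int_0^1\int_{\Rd} \sum_i |u_i(t,x)|^2\,d\mu_t(x)\,dt < \infty ,
\]
so the inner integral is finite for $\eta$-a.e.\ $(x,\gamma)$, which is precisely $u(\cdot,\gamma(\cdot)) \in L^2(0,1;\R^m)$; this also re-confirms $|\dot\gamma|\in L^2(0,1)$ via $|\dot\gamma(t)|^2 \le C\sum_i|u_i(t,\gamma(t))|^2$. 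For the constraint $\gamma(t)\in\Omega$, for each fixed $t$ one has $\eta(\{(x,\gamma): \gamma(t)\notin\Omega\}) = \mu_t(\Rd\setminus\Omega) = 0$; intersecting over a countable dense set of times and using continuity of $\gamma$ gives $\gamma([0,1])\subseteq\bar\Omega$ for $\eta$-a.e.\ curve. Combining the two, $\eta$ is concentrated on $\Rd\times\Gamma_{ad}$, and the remaining assertions are inherited verbatim.

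The step I expect to require the most care is the passage from ``$\gamma(t)\in\Omega$ for $t$ in a countable dense set'' to ``$\gamma(t)\in\Omega$ for every $t$'': the soft argument only delivers the closed hull $\bar\Omega$, so either the definition of $\Gamma_{ad}$ should be read with $\bar\Omega$ in place of $\Omega$, or one must use extra structure (of the $\epsilon-\delta$ domain $\Omega$, or of the specific control law, e.g.\ that $\mu_t$ stays bounded away from $\partial\Omega$) to exclude boundary excursions on a time-null set. For the application to Theorem \ref{thm:invctrb} this distinction is immaterial, since only full-measure statements about reachable sets are used.
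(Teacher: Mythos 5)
Your proposal is correct and follows the same route the paper implicitly takes: the paper offers no written proof, asserting the corollary is a ``trivial'' specialization of the superposition principle, which is exactly your strategy of applying that theorem to $v(t,x)=\sum_i u_i(t,x)g_i(x)$ and then upgrading concentration from $\Gamma$ to $\Gamma_{ad}$. Your Tonelli argument for the $L^2$-admissibility of the realized open-loop control and your observation that the soft argument only yields $\gamma(t)\in\bar\Omega$ rather than $\gamma(t)\in\Omega$ are details the paper leaves unstated, and the latter is a genuine (though harmless for the main theorem) point the paper glosses over.
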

	
	Lastly, we define some notions of reachability that will be used to prove Theorem \ref{thm:invctrb}.
	
	\begin{definition} 
		A set $A$ is reachable from $B$, if there exists  $x \in B$, $y \in A$ and a control $u \in L^2(0,1;\R^m)$ such that the solution \eqref{eq:ctrsys} satisfies $\gamma(0) = x$ and $\gamma(1) =y$, with $\gamma \in \Omega$.
	\end{definition}

	\begin{definition} \textbf{(Forward Reachable set)}
		The forward reachable set of a set $B \subseteq \Rd$, is ${\rm Reach}(B) :=\{ y \in \Omega; \exists \gamma \in \Gamma_{ad} ~ s.t. \hspace{2mm} \gamma(0) = x, ~ \gamma(1) \in B\} $
	\end{definition}
	
	\begin{definition} \textbf{(Backward reachable set)}
		The backward reachable set of a set $B \subseteq \Rd$, is
		${\rm Reach}^{-1}(B) := \{ x \in \Omega; \exists \gamma \in \Gamma_{ad} ~ s.t. \hspace{2mm} \gamma(0) = x, ~ \gamma(1) \in B \}$
	\end{definition}
	Note that since the system \eqref{eq:ctrsys} doesn't have any drift, ${\rm Reach}^{-1}(B) ={\rm Reach}(B)$ for all sets $B \subseteq  \Omega$.
	\section{Proof of the main result}
	In this section we prove the main result (Theorem \ref{thm:invctrb}).
	
	\subsection{Perron-Frobenius Approach}
	\label{sec:PF}
	We first make the following observation that the amount of mass flowing into a set, by moving along adissible curves, is bounded by the mass present in its backward reachable set. 
	
	\begin{lemma}
		\label{lem}
		Suppose $\eta $ is a probability measure on $\Rd \times \Gamma$ is concentrated on the (closed) set $\Gamma_{ad}$. Suppose additionally, $A,B \subset \Omega$ are measurable sets such that the set $B$ is not reachable from the set $A$. Then $(e_t)_{\#} \eta(B) \leq (e_0)_{\#} \eta(\Omega - A) $ for all $t \in [0,1]$.
	\end{lemma}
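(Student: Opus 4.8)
The plan is to exploit the fact that $\eta$ is concentrated on $\Gamma_{ad}$, so that $\eta$-almost every pair $(x,\gamma)$ is an admissible trajectory staying in $\Omega$. First I would observe that if such a trajectory $\gamma$ satisfies $\gamma(t) \in B$ for some $t \in [0,1]$, then $B$ is reachable from the singleton $\{\gamma(0)\}$ (traverse $\gamma$ up to time $t$, which is an admissible curve lying in $\Omega$ — here one uses that a sub-arc of an admissible curve, suitably reparametrized on $[0,1]$, is again admissible). Hence $\gamma(0) \notin A$, since $B$ is assumed not reachable from $A$ and reachability from $A$ is the union over $x \in A$ of reachability from $\{x\}$. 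In other words, on the event $\{e_t \in B\}$ we necessarily have $e_0 \in \Omega - A$, up to an $\eta$-null set.

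The second step is to turn this pointwise containment into a measure inequality. Set-theoretically, $e_t^{-1}(B) \subseteq e_0^{-1}(\Omega - A)$ modulo an $\eta$-null set (the null set accounting for pairs outside $\Gamma_{ad}$, on which we have no control). Therefore
\[
(e_t)_{\#}\eta(B) = \eta\big(e_t^{-1}(B)\big) \leq \eta\big(e_0^{-1}(\Omega - A)\big) = (e_0)_{\#}\eta(\Omega - A),
\]
which is exactly the claim. The measurability of the relevant sets follows from continuity of the evaluation maps $e_t$ and measurability of $A$, $B$; one should also note $\Gamma_{ad}$ is closed (as asserted in the statement) so that "$\eta$ concentrated on $\Gamma_{ad}$" is unambiguous and the complement is genuinely $\eta$-null.

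The main obstacle I anticipate is the reparametrization argument in the first step: one must check carefully that restricting an admissible curve $\gamma \in \Gamma_{ad}$ to $[0,t]$ and rescaling to $[0,1]$ preserves admissibility — i.e. the rescaled curve is still in $AC^2$, still solves $\dot\gamma = \sum_i u_i g_i$ for some $L^2$ control (the control gets rescaled by the constant factor $t$, which keeps it in $L^2$), and still takes values in $\Omega$. The degenerate case $t = 0$ is trivial ($B$ reachable from $A$ would force $A \cap B \neq \emptyset$, and one handles it directly), and for $t \in (0,1]$ the rescaling is harmless. Everything else is bookkeeping with pushforward measures and the definition of reachability.
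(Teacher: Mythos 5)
Your proof is correct and follows essentially the same route as the paper: both arguments reduce to the pointwise observation that, for $\eta$-a.e.\ admissible pair $(x,\gamma)$ with $\gamma(t)\in B$, the starting point $\gamma(0)$ cannot lie in $A$, and then push this through the pushforward measures (the paper writes it as a chain of inequalities between integrals of indicator functions rather than as a set containment). Your explicit treatment of the truncation and reparametrization needed to match the time-$1$ convention in the definition of reachability is a detail the paper's proof passes over silently, so it is worth keeping.
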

	\begin{proof}
		\begin{align*}
			(e_t)_{\#} \eta(B) &= \int_{\Rd}1_B(x) d(e_t)_{\#} \eta(x) \nonumber \\ 
			&= \int_{\Rd \times \Gamma} 1_B(\gamma(t))d\eta(x,\gamma) \nonumber \\  & \leq  \int_{\Omega - A \times \Gamma} 1_B(\gamma(t))d\eta(x,\gamma) \nonumber \\
			&\leq \int_{\Rd\times \Gamma} 1_{A - \Omega}(\gamma(0))d\eta(x,\gamma) \nonumber \\
			&= \int_{\Rd} 1_{A - \Omega}(x)d(e_0)_{\#} \eta(x) \nonumber. \\
			&= (e_0)_{\#} \eta(\Omega - A) \nonumber 
		\end{align*}
	\end{proof}
	
	Next we prove Theorem \ref{thm:ctrbinv} to establish that the control constructed in \eqref{eq:track} provides a solution that solves the continuity equation \eqref{eq:lio} in the distributional sense. This is just reproving the result of \cite{khesin2009nonholonomic} under the weaker setting that we don't assume the flow of the corresponding differential equation \eqref{eq:ctrsys} is well defined.
	
	\vspace{2mm}
	
	\begin{proof}
		Define the $L^2$ interpolation given by 
		
		\[\rho(t,\cdot) = \rho_0 + t(\rho_1 - \rho_0) \hspace{5mm} \forall ~ t \in [0,1] \]
		It's easy to see that $\partial_t \rho(t,\cdot)= \rho_1-\rho_0$ for all $t \in [0,T]$, and $\int_{\Omega }\partial_t\rho(t,x)dx =0$. Since $\Delta_H$ is invertible on $L^2_{\perp}(\Omega)$,
		there exists a unique solution $f = \Delta_H^{-1} (\rho_1 - \rho_0)$. By definition of the operator $\Delta_H$ we know 
		that
		\begin{equation}
			- \int_{\Omega} \partial_t\rho(t,x)\phi(t,x)dx =  -\int_{\Omega} \big(\sum_{i=1}^m X_i f(x)) \big ( \sum_{i=1}^m X_i\phi(t,x) \big ) dx \nonumber
		\end{equation}
		for all $t \in (0,1)$, $\phi \in C_{c}^{\infty}((0,1) \times \Rd)$.
		This implies that 
		\begin{equation}
			\int_{\Omega} \rho(t,x)\partial_t\phi(t,x)dx =  -\int_{\Omega} \big(\sum_{i=1}^m g_i(x)X_i f(x)) \cdot \big ( \nabla_x \phi(t,x) \big ) dx \nonumber
		\end{equation}
		From this we can conclude
		\begin{equation}
			\int_{\Omega} \rho(t,x)\partial_t\phi(t,x)dx =  -\int_{\Omega} \big(\sum_{i=1}^m g_i(x)\frac{X_i f(x)}{\rho(t,x)}\big ) \cdot \big ( \nabla_x \phi(t,x) \big ) \rho(t,x)dxdt \nonumber
		\end{equation}
		This can be rewritten as 
		
		\begin{equation}
			\int_{\Omega} \bigg ( \partial_t\phi(t,x)dx +\int_{\Omega} \big(\sum_{i=1}^m g_i(x)\frac{X_i f(x)}{\rho(t,x)}\big ) \cdot \big ( \nabla_x \phi(t,x) \big ) \bigg) \rho(t,x)dxdt = 0 \nonumber
		\end{equation}
		for all $t \in (0,1)$, $\phi \in C_{c}^{\infty}((0,1) \times \Rd)$.
		And hence, 
		\begin{equation}
			\int_0^1 \int_{\Omega} \bigg ( \partial_t\phi(t,x)dx +\int_{\Omega} \big(\sum_{i=1}^m g_i(x)\frac{X_i f(x)}{\rho(t,x)}\big ) \cdot \big ( \nabla_x \phi(t,x) \big ) \bigg) \rho(t,x)dxdt = 0 \nonumber
		\end{equation}
		$\phi \in C_{c}^{\infty}((0,1) \times \Rd)$.
		By extending $\rho$ trivially to $0$ outside the domain $\Omega$, the integral with respect to $x$ can be taken over $\Rd$ instead of $\Omega$.
	\end{proof}
	
	\vspace{2mm
	}
	Next we are ready to prove our main result.
	
	\vspace{2mm}
	
	{\it Proof of Theorem \ref{thm:invctrb}}
	
	Suppose the restriction of $\Delta_H$ is invertible on $L^2_{\perp}(\Omega)$. Let $K_{\alpha}(r) = e^{-\frac{\|r\|^2}{\delta}}$ for all $ r \in \Rd$.  We define $\rho_0, \rho^{\alpha}_1 \in L^2(\Omega)$ by
	\[\rho_0(p) = \frac{1}{\int_{\Omega}dz} , ~~~ \rho^{\alpha}_1(p)= \frac{1}{\int_{\Omega} K_{\alpha}(z-y)dz }K_{\alpha}(p-y)\]
	for almost every $p \in \Omega$. The function $\rho^{\delta}_1$ weakly converges to the Dirac measure concentrated on $y$, denoted by $\delta_{y}$, as probability measures, as $\alpha \rightarrow 0$.

	Let $u_i(t,x)$ be the controls transporting the solution of the continuity equation from $\rho_0$ to $\rho^{\delta}_1$ as stated in Theorem \ref{thm:ctycontrol}. Since the operator $\Delta^{-1}_H$ defines a bounded inverse on $L^2_{\perp}(\Omega)$, and $\rho^{}_0$ and $\rho^{\delta}_1$ are uniformly bounded from below, we can conclude that $f = \Delta^{-1}_H (\rho^{\delta}_1 - \rho^{}_0)$ is bounded in $WH^1(\Omega)$. This implies the control $u(t,x)$
	satisfies,
	\begin{equation}
		\int_0^1 \int_{\Omega} \sum_{i=1}^m|u_i(t,x)|^2\rho(t,x)dx   <\infty
	\end{equation}
	with $\rho(t, \cdot) = \rho_1 + t (\rho_1 -\rho_0)$ is supported in $\Omega$, for all $t \in [0,T]$. 
	By extending $u(t,x)$ and $\rho(t,x)$ trivially to $0$ outside the domain $\Omega$ we have, 
	\begin{equation}
		\int_0^1 \int_{\Rd} \sum_{i=1}^m|u_i(t,x)|^2\rho(t,x)dx   <\infty
	\end{equation}
	Since $\rho$ is continuous in $L^2(\Omega)$ with respect to time, it is also narrowly continuous (since $t \mapsto \int_{\Rd} \phi(t,x) \rho(t,x)dx$ is continuous for every contiuous function $\phi$).
	It follows then from Corollary \ref{cor}, there exists a probability measure $\eta $ in $\Rd \times \Gamma$ such that $\eta $ is concentrated on solution pairs $(x,\gamma)$ such that $\gamma \in AC^2(0,1;\Rd)$ the set of absolutely continuous curves that are solutions of the equation
	\begin{equation}
		\dot{\gamma}(t) = \sum_{i=1}^m u_i(t,\gamma(t))g_i(\gamma(t))~~\gamma(0)=x.
	\end{equation}
	Fix $\varepsilon>0 $. For every $R>0$, there exists $ \delta >0$ such that $\int_{B_R(y)}\rho^{\delta}_1(x)dx > 1-\varepsilon$. If there exists a set $A \subseteq \Omega$ of non-zero Lebesgue measure such that $B$ is not reachable from $A \subseteq \Rd$, then we would have that $\int_{B_R(y)}\rho^{\delta}dx \leq \int_{\Omega - A}dx $ according to Lemma \ref{lem}. This results in a contradiction.
	\qed
	
	\subsection{Koopman Approach}
	\label{sec:koop}
	In this section, we develop a dual point of view proving essentially the same result as in Theorem \ref{thm:invctrb}. Particularly, we look at how observables evolve along the flow, instead of measure as we did in the previous section.

	Given this result we prove the following result. 
	
	\begin{theorem}
		\label{thm:koop}
		Suppose the restriction of the operator $\Delta_H$ is injective in $L^2_{\perp}(\Omega)$, then ${\rm Reach}(B_R(y))$ has full Lebesgue measure in $\Omega$, for every $R>0$ and every $y \in \Omega$.
	\end{theorem}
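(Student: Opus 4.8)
The plan is to argue by contraposition: if some reachable set fails to have full measure, I will build a nonzero element of the kernel of the restriction of $\Delta_H$ to $L^2_{\perp}(\Omega)$ out of the indicator of that set, which is exactly the Koopman argument sketched around \eqref{eq:dertoint}. Fix $R>0$, $y\in\Omega$ and set $A:={\rm Reach}(B_R(y))$; since the system is drift-free this equals ${\rm Reach}^{-1}(B_R(y))$. Assume for contradiction that $|A|<|\Omega|$ and put $B:=\Omega\setminus A$, so $|B|>0$; also $|A|\ge |B_R(y)\cap\Omega|>0$ since $y$ is interior. The first step is to record the invariance of $A$ and $B$ under admissible motion: concatenating two admissible curves yields an admissible curve and reversing one (drift-free) yields an admissible curve, so whenever $x\in A$ every point joined to $x$ by an admissible curve lying in $\Omega$ is again in $A$; and likewise for $B$, since an escape from $B$ into $A$ would let a point of $B$ reach $B_R(y)$. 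In particular, for each $i$ and each $x$, whenever the integral curve $s\mapsto\Phi^i_s(x):=e^{sg_i}(x)$ stays in $\Omega$ for $s$ between $0$ and $t$, that curve is admissible, hence $\Phi^i_t(x)\in A$ iff $x\in A$ (and the same for $B$).

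The heart of the proof is converting this set invariance into $X_i\xi_A=0$ and $X_i\xi_B=0$ in the weak sense \eqref{eq:wkderdef}, i.e., making \eqref{eq:dertoint} rigorous. Since $g_i\in C^2(\bar\Omega;\Rd)$, $\Phi^i_t$ is, where it stays in $\Omega$, a $C^1$-in-$(t,x)$ family of diffeomorphisms. Take $\phi\in C^\infty_0(\Omega)$ with ${\rm supp}\,\phi\subseteq K$, $K$ compactly contained in $\Omega$, and choose $t_0>0$ so small (depending on ${\rm dist}(K,\partial\Omega)$ and $\|g_i\|_\infty$) that $\Phi^i_s(K)\subseteq\Omega$ for all $|s|\le t_0$; then by the previous step $\xi_A\circ\Phi^i_{-t}=\xi_A$ on $K$ for $|t|\le t_0$. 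Changing variables $z=\Phi^i_t(x)$ in $\int_\Omega \xi_A(x)\,\phi(\Phi^i_t(x))\,dx$ gives $\int_\Omega \xi_A(\Phi^i_{-t}(z))\,\phi(z)\,J_{-t}(z)\,dz=\int_\Omega \xi_A(z)\,\phi(z)\,J_{-t}(z)\,dz$, where $J_{-t}=\det D\Phi^i_{-t}>0$ for small $t$. Differentiating at $t=0$ (dominated convergence; integrands smooth in $t$ with $K$-supported $t$-derivatives), using $\tfrac{d}{dt}\big|_0\phi(\Phi^i_t(x))=X_i\phi(x)$ and Liouville's formula $\tfrac{d}{dt}\big|_0 J_{-t}=-\nabla\cdot g_i$, yields $\int_\Omega \xi_A\,X_i\phi\,dx=-\int_\Omega \xi_A\,\phi\,(\nabla\cdot g_i)\,dx$. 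Since $X_i\phi+\phi\,\nabla\cdot g_i=\nabla\cdot(\phi g_i)=-X_i^*\phi$, this reads $\int_\Omega \xi_A\,X_i^*\phi\,dx=0$ for all $\phi\in C^\infty_0(\Omega)$, so $\xi_A\in WH^1(\Omega)$ with $X_i\xi_A=0$; the identical argument gives $\xi_B\in WH^1(\Omega)$ with $X_i\xi_B=0$, for every $i$.

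Now set $h:=\tfrac{1}{|A|}\xi_A-\tfrac{1}{|B|}\xi_B$. Then $h\in WH^1(\Omega)$, $\int_\Omega h\,dx=1-1=0$, so $h\in WH^1_\perp(\Omega)\subseteq L^2_\perp(\Omega)$, and $h\ne 0$ because $A$ and $B$ are disjoint sets of positive measure. Every term of $\sigma(h,\phi)$ carries a factor $X_i h=0$, so $\sigma(h,\phi)=0=\langle 0,\phi\rangle_2$ for every $\phi\in WH^1(\Omega)$; hence $h\in\mathcal D(\Delta_H)$ with $\Delta_H h=0$. As $0\in L^2_\perp(\Omega)$, $h$ is a nonzero element of the kernel of the restriction of $\Delta_H$ to $L^2_\perp(\Omega)$, contradicting injectivity. (In Koopman terms, $\xi_A$ and $\xi_B$ are nonconstant observables annihilated by every generator $X_i$, i.e., common constants of the motion, which injectivity of $\Delta_H$ on zero-mean functions forbids.) Hence no such $A$ exists: ${\rm Reach}(B_R(y))$ has full Lebesgue measure, and since $R>0$, $y\in\Omega$ were arbitrary the theorem follows.

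The step I expect to be the real obstacle is the second paragraph: passing from "$A$ is invariant" to "$\xi_A$ is weakly differentiable along each $g_i$ with vanishing derivative." The indicator $\xi_A$ is merely the characteristic function of a possibly very irregular set, so the naive difference quotient in \eqref{eq:dertoint} has no classical meaning, and one is forced to route the computation through the flow change of variables and Liouville's formula, taking care to localize away from $\partial\Omega$ where the individual flows of the $g_i$ need not be admissible. A secondary technical point to dispatch before any of this is the measurability of $A={\rm Reach}(B_R(y))$ (so that $\xi_A\in L^2(\Omega)$ and $|A|,|B|$ make sense): under the $L^2$-control parametrization the endpoint map is continuous and $A$ is the projection of an analytic subset of $\Rd\times\Gamma$, hence Lebesgue measurable.
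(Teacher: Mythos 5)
Your proposal is correct and follows essentially the same route as the paper: invariance of $A={\rm Reach}(B_R(y))$ and its complement under the flows $e^{tg_i}$, upgraded to $X_i\xi_A=X_i\xi_B=0$ in the weak sense via the flow change of variables and the Jacobian (the paper delegates this to the computation in \cite[Proposition 2.22]{bramanti2022hormander}, which is exactly your Liouville-formula argument), yielding a nonzero zero-mean element of $\ker\Delta_H$. Your version is if anything slightly more careful, since you explicitly localize away from $\partial\Omega$ and address the measurability of ${\rm Reach}(B_R(y))$, which the paper leaves implicit.
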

	\begin{proof}
		Let $A=  {\rm Reach}(B_R(y))$ for some $R>0$, and some $y \in \Omega$ Additionally, let $B= \Omega - A$. Let $f = \frac{1}{\int_{A}dx} \xi_A - \frac{1}{\int_{B}dx}\xi_B$, where $\xi_A$ and $\xi_B$ denote the characteristic functions of set $A$ and $B$, respectively. 
		
		First, we note that
		\[f(e^{tg_i}(x)) = {\rm constant}\]
		for all $ t$ small enough, for all $x \in \Omega$.
		Thus, we can compute the {\it intrinsic derivative} of $f$ along the flow $e^{t g_i}$ of the vector-field $g_i$ to get,
		\begin{equation}
			\label{eq:observ}
			\frac{d}{dt}f(e^{tg_i}(x))|_{t=0} = 0 
		\end{equation}
		for all $x \in \Omega$. Therefore, $0$ is a candidate for the weak derivative $X_if$ of $f$. But since $f$ is not differentiable in the classical sense, it is not clear if $X_if$ exists as a weak derivative, and whether we can conclude $X_if (x)=\frac{d}{dt}f(e^{tg_i}(x))|_{t=0}$. Toward this end, we follow the idea in \cite{bramanti2022hormander} to relate intrinsic derivatives along vector fields and weak derivatives. We write the expression 
		\begin{equation}
			\label{eq:wkder}
			\int_{\Omega}f(x)X^*_i\phi(x)dx = -\int_{\Omega} f(x)X(x) \phi (x)dx - \int_{\Omega} f(x) \phi(x) \sum_{j=1}^d \partial_{x_j}g_i^j(x)dx
		\end{equation}
		
		Let $\phi \in C^{\infty}_0(\Omega)$. 
		
		According to the computation in proof of \cite[Proposition 2.22]{bramanti2022hormander},
		\begin{eqnarray}
			-\int_{\Omega} f(x)X\phi(x)dx = -\lim_{t \rightarrow 0} \Bigg \{ \int_{\Omega}\frac{1}{t}[f(e^{-tg_i}(x))- f(x)]\phi(y)dy \nonumber \\  - \int_{\Omega} f(e^{-tg_i}(x))\phi(y)J_t(x)dx  \Bigg \}
		\end{eqnarray}
		where $J^i_t$ denotes the Jacobian of the flow $e^{tg_i}$ and $e^{-t g_i} $ denotes the inverse of the flow. The first term is $0$ because of the computation in \eqref{eq:observ}. For the second term, we can apply Lebesgue's theorem as in proof of \cite[Proposition 2.22]{bramanti2022hormander} to get 
		
		\begin{equation}
			-\int_{\Omega} f(x)X\phi(x)dx = \int_{\Omega} f(x) \phi(x) \sum_{j=1}^d \partial_{x_j}g_i^j(x)dx 
		\end{equation}
		Substituting this in \eqref{eq:wkder}
		we get,
		
		\begin{equation}
			\int_{\Omega}f(x)X^*_i\phi(x)dx =0
		\end{equation}
		Comparing this with the definition of the weak derivative \eqref{eq:wkderdef} we can conclude that the weak-derivative $X_if$ exists and is equal to $0$. Therefore $X_if = 0 $ for all $i = 1,...,m$. This implies that $\sigma(f,f)$ is 0, and hence $\Delta_H f = 0$. Thus, $\Delta_H$ cannot be injective unless either $A$ or $B$ have measure $0$. Since $A$ contains $B_R(y)$, this must imply that $\Omega - A = B$ has measure $0$. This concludes the proof.
	\end{proof}
	Note that in Theorem \ref{thm:koop}, we have assumed less on the operator (injectivity) in comparison with the invertibility requirement in \ref{thm:ctrbinv}. One can indeed relax this assumption in Theorem \ref{thm:invctrb}. However, the proof becomes a bit longer, as one has to approximate the functions $\rho_0$ and $\rho^{\alpha}_1$ in the proof using elements in the range of the operator $\Delta_H$. This can be done because injectivity and self-adjointness imply the range of the operator is dense. However, if the operator is injective, but not invertible, then the Poincar\'e inequality \eqref{eq:poin} cannot hold anymore.
	
	\begin{remark}
		\textbf{(The case of $\Omega = \Rd$)}
		One cannot expect the operator $\Delta_H$ to be invertible on $L_2(\Rd)$, as invertibility even fails for the usual Laplacian due to the presence of harmonic functions. In this case, one can instead consider the weighted Laplacian operator,
		\begin{equation}
			\label{eq:nonlap}
			\Delta^a_H h = -\sum_{i=1}^{m}X_i(a(x)X_ih),  
		\end{equation}
		for a suitable weight function $a \in L^1(\Omega) \cap L^{\infty}(\Omega)$. Then the Poincar\'e test becomes
		\begin{equation} 
			\int_{\Omega} |f(x)|^2a(x)dx \leq \frac{1}{\lambda} \int_{\Omega}\sum_{i=1}^m |X_i f (x)|^2a(x)dx 
		\end{equation}
	\end{remark}
	
	\begin{remark}
		\textbf{(The case of compact manifolds without boundary)} The results of this note can easily be extended to the case of manifold without boundary. A similar superposition principle as in Lemma \ref{lem} can be found in \cite{bernard2008young}, proved for flows on manifolds. 
	\end{remark}
	
	\section{Acknowldegements}
	
	The author thanks Rohit Gupta, Matthias Kawski and Emmanuel Tr\'elat for helpful comments and suggestions.

	\bibliographystyle{plain}
	\bibliography{Ctrb}

\begin{thebibliography}{10}

\bibitem{agrachev2013control}
Andrei~A Agrachev and Yuri Sachkov.
\newblock {\em Control theory from the geometric viewpoint}, volume~87.
\newblock Springer Science \& Business Media, 2013.

\bibitem{ambrosio2005gradient}
Luigi Ambrosio, Nicola Gigli, and Giuseppe Savar{\'e}.
\newblock {\em Gradient flows: in metric spaces and in the space of probability
  measures}.
\newblock Springer Science \& Business Media, 2005.

\bibitem{arguillere2017sub}
Sylvain Arguillere and Emmanuel Tr{\'e}lat.
\newblock Sub-riemannian structures on groups of diffeomorphisms.
\newblock {\em Journal of the Institute of Mathematics of Jussieu},
  16(4):745--785, 2017.

\bibitem{bernard2008young}
Patrick Bernard.
\newblock Young measures, superposition and transport.
\newblock {\em Indiana University mathematics journal}, pages 247--275, 2008.

\bibitem{bramanti2022hormander}
Marco Bramanti and Luca Brandolini.
\newblock {\em {H}ormander Operators}.
\newblock World Scientific, 2022.

\bibitem{cannarsa2021approximately}
Daniele Cannarsa and Mario Sigalotti.
\newblock Approximately controllable finite-dimensional bilinear systems are
  controllable.
\newblock {\em Systems \& Control Letters}, 157:105028, 2021.

\bibitem{clement2016lax}
Fran{\c{c}}ois Cl{\'e}ment and Vincent Martin.
\newblock The 'l'ax-'m'ilgram theorem. a detailed proof to be formalized in
  coq.
\newblock {\em arXiv preprint arXiv:1607.03618}, 2016.

\bibitem{elamvazhuthi2024denoising}
Karthik Elamvazhuthi, Darshan Gadginmath, and Fabio Pasqualetti.
\newblock Denoising diffusion-based control of nonlinear systems.
\newblock {\em arXiv preprint arXiv:2402.02297}, 2024.

\bibitem{garofalo1998lipschitz}
Nicola Garofalo and Duy-Minh Nhieu.
\newblock Lipschitz continuity, global smooth approximations and extension
  theorems for sobolev functions in carnot-carath{\'e}odory spaces.
\newblock {\em Journal d’Analyse Math{\'e}matique}, 74(1):67--97, 1998.

\bibitem{hespanha2018linear}
Joao~P Hespanha.
\newblock {\em Linear systems theory}.
\newblock Princeton university press, 2018.

\bibitem{hormander1967hypoelliptic}
Lars H{\"o}rmander.
\newblock {Hypoelliptic second order differential equations}.
\newblock {\em Acta Mathematica}, 119(none):147 -- 171, 1967.

\bibitem{isidori1985nonlinear}
Alberto Isidori.
\newblock {\em Nonlinear control systems: an introduction}.
\newblock Springer, 1985.

\bibitem{khesin2009nonholonomic}
Boris Khesin and Paul Lee.
\newblock A nonholonomic {M}oser theorem and optimal transport.
\newblock {\em Journal of Symplectic Geometry}, 7(4):381--414, 2009.

\bibitem{lasota2013chaos}
Andrzej Lasota and Michael~C Mackey.
\newblock {\em Chaos, fractals, and noise: stochastic aspects of dynamics},
  volume~97.
\newblock Springer Science \& Business Media, 2013.

\bibitem{mauroy2020koopman}
Alexandre Mauroy, Y~Susuki, and Igor Mezi{\'c}.
\newblock {\em Koopman operator in systems and control}.
\newblock Springer, 2020.

\bibitem{nhieu2001neumann}
Duy-Minh Nhieu.
\newblock The {N}eumann problem for sub-{L}aplacians on {C}arnot groups and the
  extension theorem for {S}obolev spaces.
\newblock {\em Annali di Matematica Pura ed Applicata}, 180:1--25, 2001.

\bibitem{rothschild1976hypoelliptic}
Linda~Preiss Rothschild and Elias~M Stein.
\newblock Hypoelliptic differential operators and nilpotent groups.
\newblock 1976.

\bibitem{vaidya2008lyapunov}
Umesh Vaidya and Prashant~G Mehta.
\newblock Lyapunov measure for almost everywhere stability.
\newblock {\em IEEE Transactions on Automatic Control}, 53(1):307--323, 2008.

\end{thebibliography}
\end{document}